\numberwithin{equation}{section}
\numberwithin{figure}{section}
  \theoremstyle{plain}
  \newtheorem*{thm*}{\protect\theoremname}
\theoremstyle{plain}
\newtheorem{thm}{\protect\theoremname}
  \theoremstyle{plain}
  \newtheorem{lem}[thm]{\protect\lemmaname}
  \theoremstyle{plain}
  \newtheorem{prop}[thm]{\protect\propositionname}
  \providecommand{\lemmaname}{Lemma}
  \providecommand{\propositionname}{Proposition}
  \providecommand{\theoremname}{Theorem}
\providecommand{\theoremname}{Theorem}
\begin{document}

\title{A probabilistic interpretation of the Volkenborn integral}

\author{A. Bhandari$^{\left(1\right)}$ and C. Vignat$^{\left(2\right)}$}

\address{$\left(1\right)$ A.Bhandari was with the Biomedical Imaging Laboratory, E.P.F.L., Lausanne,
Switzerland, during the completion of this work $\left(2\right)$ Information Theory Laboratory, L.T.H.I.,
E.P.F.L., Lausanne, Switzerland}

\email{christophe.vignat@epfl.ch, ayush.bhandari@epfl.ch}
\begin{abstract}
In this paper, we provide a probabilistic interpretation of the Volkenborn
integral; this allows us to extend results by T. Kim et al about sums
of Euler numbers to sums of Bernoulli numbers. We also obtain a probabilistic
representation of the multidimensional Volkenborn integral which allows
us to derive a multivariate version of Raabe's multiplication theorem
for the higher-order Bernoulli and Euler polynomials.
\end{abstract}
\maketitle

\section{Introduction}

The Volkenborn integral was introduced in 1971 by A. Volkenborn in
his PhD dissertation and subsequently in the set of twin papers \cite{Volkenborn};
a more recent treatment of the subject can be found in \cite{Robert}.
The Volkenborn integral, or fermionic $p-$adic $q-$integral on $\mathbb{Z}_{p}$,
of a function $f$ is defined as 
\[
\int_{\mathbb{Z}_{p}}f\left(y\right)d\mu_{-q}\left(y\right)=\lim_{N\to+\infty}\frac{1+q}{1+q^{p^{N}}}\sum_{x=0}^{p^{N}-1}f\left(x\right)\left(-q\right)^{x}
\]

In particular, the $q=1$-Volkenborn integral satisfies \cite[eq. (1.6)]{Kim1}
\[
\int_{\mathbb{Z}_{p}}e^{\left(x+y\right)t}d\mu_{-1}\left(y\right)=\frac{2}{e^{t}+1}e^{xt}=\sum_{n=0}^{+\infty}E_{n}\left(x\right)\frac{t^{n}}{n!}.
\]
where $E_{n}\left(x\right)$ is the Euler polynomial of degree $n$.

Another interesting case is the $q=0$-Volkenborn integral which satisfies
\cite[p. 271]{Robert}
\[
\int_{\mathbb{Z}_{p}}e^{\left(x+y\right)t}d\mu_{0}\left(y\right)=\frac{te^{xt}}{e^{t}-1}=\sum_{n=0}^{+\infty}B_{n}\left(x\right)\frac{t^{n}}{n!}
\]
where $B_{n}\left(x\right)$ is the Bernoulli polynomial of degree
$n.$ From this result we deduce the following
\begin{thm*}
If $f\left(x\right)$ is analytic in a neighborhood of $0$ then its
$q=0$-Volkenborn integral can be computed as the expectation %
\footnote{in the rest of this paper, we use the notation $\mathbb{E}_{X}f\left(X\right)$
for the probabilistic expectation $\int f\left(x\right)p_{X}\left(x\right)dx$
where $ $$p_{X}$ is the probability density function of the random
variable $X.$ When no ambiguity occurs, we also denote $\mathbb{E}f\left(X_{1},X_{2},\dots\right)$
the expectation over all random variables that appear as arguments
of the function $f.$%
} 
\begin{equation}
\int_{\mathbb{Z}_{p}}f\left(x\right)d\mu_{0}\left(x\right)=\mathbb{E}f\left(x+\imath L_{B}-\frac{1}{2}\right)\label{eq:probabilistic}
\end{equation}
where the random variable $L_{B}$ follows the logistic distribution
with density
\begin{equation}
\frac{\pi}{2}sech^{2}\left(\pi x\right),\,\, x\in\mathbb{R}.\label{eq:logistic}
\end{equation}
Moreover, its $q=1$-Volkenborn integral coincides with the expectation
\[
\int_{\mathbb{Z}_{p}}f\left(x\right)d\mu_{1}\left(x\right)=\mathbb{E}f\left(x+\imath L_{E}-\frac{1}{2}\right)
\]
where the random variable $L_{E}$ follows the hyperbolic secant distribution
with density
\begin{equation}
sech\left(\pi x\right),\,\, x\in\mathbb{R}.\label{eq:logistic-1}
\end{equation}
\end{thm*}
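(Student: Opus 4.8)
The plan is to use linearity of both sides of (\ref{eq:probabilistic}) in $f$: I would first establish the identity for the exponentials $f(y)=e^{yt}$ --- equivalently, for all monomials $f(y)=y^{n}$ at once, by comparing coefficients of $t^{n}/n!$ --- and then recover an arbitrary analytic $f$ from its Taylor series about $0$. I read (\ref{eq:probabilistic}) as the identity $\int_{\mathbb{Z}_{p}}f(x+y)\,d\mu_{0}(y)=\mathbb{E}\,f\bigl(x+\imath L_{B}-\tfrac12\bigr)$ of analytic functions of the parameter $x$ (the displayed form being the case where $x$ is itself the integration variable), so the whole matter reduces to matching the generating function $\mathbb{E}\,e^{(x+\imath L_{B}-1/2)t}$ with $\int_{\mathbb{Z}_{p}}e^{(x+y)t}\,d\mu_{0}(y)=t\,e^{xt}/(e^{t}-1)$, and analogously with $L_{E}$ and $\mu_{-1}$.

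\emph{Step 1 (the characteristic functions).} I would use the representation $L_{B}\stackrel{d}{=}\frac{1}{2\pi}\log\frac{U}{1-U}$ with $U$ uniform on $(0,1)$: the change of variables $u=(1+e^{-2\pi w})^{-1}$ transforms the uniform density into $\frac{\pi}{2}sech^{2}(\pi w)$, which is (\ref{eq:logistic}). Hence, with $s=\imath t/(2\pi)$,
\[
\mathbb{E}\,e^{\imath tL_{B}}=\mathbb{E}\,[(U/(1-U))^{s}]=\int_{0}^{1}u^{s}(1-u)^{-s}\,du=\Gamma(1+s)\Gamma(1-s)=\frac{\pi s}{\sin\pi s}=\frac{t/2}{\sinh(t/2)},
\]
the middle equalities being the Beta integral and the reflection formula $\Gamma(1+s)\Gamma(1-s)=\pi s/\sin\pi s$; equivalently one sums the residues of $e^{\imath tz}\frac{\pi}{2}sech^{2}(\pi z)$ at its double poles $z=\imath(n+\tfrac12)$, $n\ge0$. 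In the same way $L_{E}$ is $\tfrac12$ times a standard hyperbolic secant variable (characteristic function $sech(t)$), so $\mathbb{E}\,e^{\imath tL_{E}}=sech(t/2)$.

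\emph{Step 2 (matching, and the monomial case).} Multiplying by $e^{(x-1/2)t}$ and simplifying,
\[
\mathbb{E}\,e^{(x+\imath L_{B}-1/2)t}=e^{(x-1/2)t}\,\frac{t/2}{\sinh(t/2)}=\frac{t\,e^{xt}}{e^{t}-1},\qquad\mathbb{E}\,e^{(x+\imath L_{E}-1/2)t}=e^{(x-1/2)t}\,sech(t/2)=\frac{2\,e^{xt}}{e^{t}+1},
\]
whose right-hand sides are exactly $\int_{\mathbb{Z}_{p}}e^{(x+y)t}\,d\mu_{0}(y)$ and $\int_{\mathbb{Z}_{p}}e^{(x+y)t}\,d\mu_{-1}(y)$ from the Introduction (the second is the $q=1$ case of the theorem). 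Comparing coefficients of $t^{n}/n!$ then gives, for every $n\ge0$,
\[
\mathbb{E}\bigl(x+\imath L_{B}-\tfrac12\bigr)^{n}=B_{n}(x)=\int_{\mathbb{Z}_{p}}(x+y)^{n}\,d\mu_{0}(y),\qquad\mathbb{E}\bigl(x+\imath L_{E}-\tfrac12\bigr)^{n}=E_{n}(x)=\int_{\mathbb{Z}_{p}}(x+y)^{n}\,d\mu_{-1}(y).
\]

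\emph{Step 3 (from monomials to analytic $f$; the main obstacle).} For $f(z)=\sum_{n\ge0}c_{n}z^{n}$ near $0$, summing the identities of Step 2 against $c_{n}$ would yield $\int_{\mathbb{Z}_{p}}f(x+y)\,d\mu_{0}(y)=\sum_{n}c_{n}B_{n}(x)=\mathbb{E}\,f\bigl(x+\imath L_{B}-\tfrac12\bigr)$, and likewise in the $q=1$ case. The one genuinely delicate point --- the part I expect to be the main obstacle --- is the interchange of $\sum_{n}$ with $\mathbb{E}$ (the corresponding interchange on the Volkenborn side is already implicit in the Introduction). Because $\imath L_{B}-\tfrac12$ ranges over the whole vertical line $\{\operatorname{Re}z=-\tfrac12\}$, the expectation is literally meaningful only when $f$ extends analytically well past a neighborhood of $0$, and $\sum_{n}|c_{n}|\,\mathbb{E}\,\bigl|x+\imath L_{B}-\tfrac12\bigr|^{n}<\infty$ can be secured --- by dominated convergence --- once $f$ is entire of exponential type strictly below $2\pi$ in the Bernoulli case, resp.\ below $\pi$ in the Euler case; these rates reflect the $e^{-2\pi|y|}$, resp.\ $e^{-\pi|y|}$, tails of the densities (\ref{eq:logistic})--(\ref{eq:logistic-1}) and the factorial growth of $B_{n}$, resp.\ $E_{n}$. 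For $f$ merely analytic in a neighborhood of $0$, the identity is to be understood coefficientwise, both sides then being the formally identical series $\sum_{n}c_{n}B_{n}(x)$, resp.\ $\sum_{n}c_{n}E_{n}(x)$. Apart from this convergence bookkeeping, the argument is just the elementary moment computation of Steps 1--2.
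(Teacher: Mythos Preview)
Your proof is correct and follows the same approach as the paper: compute the characteristic functions of $L_{B}$ and $L_{E}$ and match them against the known Volkenborn generating functions $te^{xt}/(e^{t}-1)$ and $2e^{xt}/(e^{t}+1)$. The paper's own proof is in fact only a two-line sketch stating those characteristic functions without derivation; your Steps~1--2 supply the computation (via the Beta integral and reflection formula), and your Step~3 convergence discussion---distinguishing the literal expectation for entire $f$ of suitable exponential type from the coefficientwise reading for $f$ merely analytic near $0$---goes beyond anything the paper addresses.
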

\begin{proof}
These results can be proved by computing the characteristic functions
associated to the logistic distribution
\[
\mathbb{E}\exp\left(\imath tL_{B}\right)=\frac{t}{2}\text{csch}\left(\frac{t}{2}\right)
\]
and to the hyperbolic secant distribution
\[
\mathbb{E}\exp\left(\imath tL_{E}\right)=\text{sech}\left(\frac{t}{2}\right).
\]

\end{proof}
The special case $f\left(x\right)=x^{n}$ yields the following moment
representation for the Bernoulli polynomial of degree $n$ 
\begin{equation}
B_{n}\left(x\right)=\mathbb{E}\left(x+\imath L_{B}-\frac{1}{2}\right)^{n}\label{eq:Bn(x)}
\end{equation}
and 
\[
E_{n}\left(x\right)=\mathbb{E}\left(x+\imath L_{E}-\frac{1}{2}\right)^{n}
\]
for the Euler polynomial of degree $n.$ Moreover, choosing $x=0$
yields the following moment representation for the $n-$th Bernoulli
number
\[
B_{n}=B_{n}\left(0\right)=\mathbb{E}\left(\imath L_{B}-\frac{1}{2}\right)^{n}
\]
and 
\begin{equation}
E_{n}=E_{n}\left(0\right)=\mathbb{E}\left(\imath L_{E}-\frac{1}{2}\right)^{n}\label{eq:En}
\end{equation}
for the $n-$th Euler number %
\footnote{note that it differs from the $n-$th Euler number of the first kind
defined by $\tilde{E}_{n}=2^{n}E_{n}\left(\frac{1}{2}\right).$%
} where $L_{B}$ and $L_{E}$ follow respectively the logistic distribution
\eqref{eq:logistic} and the hyperbolic secant distribution \eqref{eq:logistic-1}.

An important feature of the logistic and hyperbolic secant random
variables is the following cancellation property:
\begin{lem}
If $U_{B}$ is a continuous random variable uniformly distributed
over $\left[0,1\right]$ and independent of $L_{B}$ then 
\[
\mathbb{E}\left(x+\imath L_{B}-\frac{1}{2}+U_{B}\right)^{n}=x^{n}.
\]
Accordingly, if $U_{E}$ is a Rademacher distributed random variable
($\Pr\left\{ U_{E}=0\right\} =\Pr\left\{ U_{E}=1\right\} =\frac{1}{2}$)
then 
\[
\mathbb{E}\left(x+\imath L_{E}-\frac{1}{2}+U_{E}\right)^{n}=x^{n}.
\]

\end{lem}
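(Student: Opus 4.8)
The plan is to pass to exponential generating functions and turn the convolution of the two independent variables into a product of moment generating functions, where a pleasant cancellation occurs. It is in fact cleanest to note that, for a random variable $Z$ whose exponential moments are finite near $0$, the assertion ``$\mathbb{E}\left(x+Z\right)^{n}=x^{n}$ for all $x$ and all $n\geq0$'' is equivalent, after multiplying by $t^{n}/n!$ and summing, to $\mathbb{E}e^{t\left(x+Z\right)}=e^{xt}$ near $t=0$; pulling out the deterministic factor $e^{xt}$, this is just the requirement that the moment generating function $\mathbb{E}e^{tZ}$ be identically $1$. One can equivalently keep $x$ in the computation, as we do below.

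First I would treat the logistic case. By independence of $L_{B}$ and $U_{B}$,
\[
\mathbb{E}e^{t\left(x+\imath L_{B}-\frac{1}{2}+U_{B}\right)}=\mathbb{E}e^{t\left(x+\imath L_{B}-\frac{1}{2}\right)}\cdot\mathbb{E}e^{tU_{B}}=\frac{te^{xt}}{e^{t}-1}\cdot\frac{e^{t}-1}{t}=e^{xt},
\]
where the first factor is the generating function $\sum_{n}B_{n}\left(x\right)t^{n}/n!=te^{xt}/\left(e^{t}-1\right)$ recalled in the introduction together with the moment representation \eqref{eq:Bn(x)}, and the second is the elementary integral $\mathbb{E}e^{tU_{B}}=\int_{0}^{1}e^{tu}\,du=\left(e^{t}-1\right)/t$. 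Comparing the coefficient of $t^{n}/n!$ at the two ends against $e^{xt}=\sum_{n}x^{n}t^{n}/n!$ yields $\mathbb{E}\left(x+\imath L_{B}-\frac{1}{2}+U_{B}\right)^{n}=x^{n}$.

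The hyperbolic secant case is structurally identical: the same factorization reads $\mathbb{E}e^{t\left(x+\imath L_{E}-\frac{1}{2}\right)}\cdot\mathbb{E}e^{tU_{E}}$, with first factor the Euler generating function $\sum_{n}E_{n}\left(x\right)t^{n}/n!=2e^{xt}/\left(e^{t}+1\right)$ from the introduction together with the corresponding moment representation $E_{n}\left(x\right)=\mathbb{E}\left(x+\imath L_{E}-\frac{1}{2}\right)^{n}$ (cf.~\eqref{eq:En}), and second factor $\mathbb{E}e^{tU_{E}}=\frac{1}{2}\left(e^{0}+e^{t}\right)=\left(e^{t}+1\right)/2$ since $U_{E}$ equals $0$ or $1$ with probability $\frac{1}{2}$ each. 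The product is again $e^{xt}$, and matching coefficients finishes the proof.

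The only step that genuinely needs care — and the sole, mild obstacle — is the interchange of expectation and summation used to identify $\mathbb{E}e^{t\left(x+\imath L_{B}-\frac{1}{2}\right)}$ with $\sum_{n}\mathbb{E}\left(x+\imath L_{B}-\frac{1}{2}\right)^{n}t^{n}/n!$, and the subsequent comparison of Taylor coefficients. This is justified because the logistic and hyperbolic secant densities decay like $e^{-2\pi\left|x\right|}$, so $\mathbb{E}e^{sL_{B}}$ and $\mathbb{E}e^{sL_{E}}$ are finite for $\left|s\right|<2\pi$ — equivalently, the characteristic functions $\frac{t}{2}\text{csch}\left(\frac{t}{2}\right)$ and $\text{sech}\left(\frac{t}{2}\right)$ recalled just above are analytic at the origin. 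Hence all the series in sight converge on a common disc about $0$, dominated convergence legitimizes the factorization over the independent pair, and two power series that agree near $0$ have equal coefficients.
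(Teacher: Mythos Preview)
Your proof is correct and follows essentially the same route as the paper: factor the moment generating function of the sum via independence, observe the cancellation, and read off the moments; the paper states this more tersely by just noting $\mathbb{E}\exp\!\bigl(t(\imath L_{B}-\tfrac12)\bigr)\exp(tU_{B})=1$ and concluding that all nonzero moments vanish. One inconsequential slip: the hyperbolic secant density decays like $e^{-\pi|x|}$, not $e^{-2\pi|x|}$, but this still gives analyticity of the mgf near $0$ and does not affect your argument.
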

Both results can be shown considering the characteristic functions
of the involved variables; for example, in the Bernoulli case,
\[
\mathbb{E}\exp\left(t\left(\imath L_{B}-\frac{1}{2}\right)\right)\exp\left(tU_{B}\right)=1
\]
so that all integer nonzero moments of the random variable $\imath L_{B}-\frac{1}{2}+U_{B}$
vanish.

The Volkenborn integrals were used by Kim et al \cite{Kim1} to obtain
non-trivial identities on Euler numbers $E_{n}$ using integrals of
the Bernstein polynomials defined as
\[
\mathcal{B}_{k,n}\left(x\right)=\binom{n}{k}x^{k}\left(1-x\right)^{n-k},\,\,0\le x\le1.
\]

In this paper, we show that the probabilistic representation \eqref{eq:probabilistic}
of the Volkenborn integral makes its computation very easy to handle.
We illustrate this fact by extending the non-trivial identities of
\cite{Kim1} to the case of Bernoulli numbers. In the second section,
we derive the probabilistic equivalent of the multidimensional Volkenborn
integrals as introduced in \cite{Kim2} and we use it to prove a multivariate
version of Raabe's multiplication theorem for Bernoulli and Euler
polynomials.

\section{Identities for Bernoulli numbers and polynomials}

\subsection{first-order identity}

In order to obtain non-trivial identities on Bernoulli numbers, we
replace the Bernstein polynomials used by Kim et al by the Beta polynomials
\[
\mathfrak{B}_{k,n}\left(x\right)=x^{k}\left(1+x\right)^{n-k},\,\,0\le k\le n
\]
and, with $X=\imath L_{B}-\frac{1}{2},$ compute the expectation $\mathbb{E}\mathfrak{B}_{k,n}\left(X\right)$
in two different ways:

- the first way is by applying the binomial formula
\begin{equation}
\mathbb{E}\mathfrak{B}_{k,n}\left(X\right)=\mathbb{E}X^{k}\sum_{j=0}^{n-k}\binom{n-k}{j}X^{j}=\sum_{j=0}^{n-k}\binom{n-k}{j}B_{j+k}\label{eq:E1}
\end{equation}

- the second way is by expressing $X=\left(X+1\right)-1$ so that
\begin{equation}
\mathbb{E}\mathfrak{B}_{n,k}\left(X\right)=\mathbb{E}\sum_{j=0}^{k}\binom{k}{j}\left(-1\right)^{j}\left(1+X\right)^{\left(n-k\right)+\left(k-j\right)}=\sum_{j=0}^{k}\binom{k}{j}\left(-1\right)^{j}\left\{ B_{n-j}+\delta_{n-j-1}\right\} ;\label{eq:E2}
\end{equation}
since $j\le k\le n,$ the Kronecker adds a term $\binom{n-1}{n-1}\left(-1\right)^{n-1}$
if $k=n-1$ or $\binom{n}{n-1}\left(-1\right)^{n-1}$ if $k=n$ and
no term otherwise. We conclude the following
\begin{thm}
The Bernoulli numbers satisfy
\[
\sum_{j=0}^{n-k}\binom{n-k}{j}B_{j+k}=\begin{cases}
\sum_{j=0}^{k}\binom{k}{j}\left(-1\right)^{j}B_{n-j} & \text{if\,\,}0\le k\le n-2,\text{\,\,}n\ge2\\
\sum_{j=0}^{k}\binom{k}{j}\left(-1\right)^{j}B_{n-j}+\left(-1\right)^{n-1} & \text{if\,\,}k=n-1,\,\, n\ge1\\
\sum_{j=0}^{k}\binom{k}{j}\left(-1\right)^{j}B_{n-j}+n\left(-1\right)^{n-1} & \text{if\,\,}k=n,\,\, n\ge0
\end{cases}
\]

\end{thm}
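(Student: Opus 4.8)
The plan is to equate the two evaluations of $\mathbb{E}\,\mathfrak{B}_{k,n}(X)$ already set up in \eqref{eq:E1} and \eqref{eq:E2}, where $X=\imath L_B-\tfrac12$. Since $\mathfrak{B}_{k,n}$ is a polynomial of degree $n$ and the logistic law has moments of all orders, $\mathbb{E}\,\mathfrak{B}_{k,n}(X)$ is a well-defined finite combination of the moments $\mathbb{E}X^m=B_m$, the latter being \eqref{eq:Bn(x)} at $x=0$. Expanding the factor $(1+X)^{n-k}$ by the binomial theorem produces the left-hand expression $\sum_{j=0}^{n-k}\binom{n-k}{j}B_{j+k}$ of \eqref{eq:E1}; writing instead $X=(1+X)-1$, expanding $X^k$, and collecting powers of $1+X$ produces $\sum_{j=0}^{k}\binom{k}{j}(-1)^j\,\mathbb{E}(1+X)^{n-j}$, which is \eqref{eq:E2} once $\mathbb{E}(1+X)^m$ is known.

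The one identity I would record separately is $\mathbb{E}(1+X)^m=B_m+\delta_{m-1}$, with $\delta_{m-1}$ equal to $1$ iff $m=1$. This follows either from $\mathbb{E}(1+X)^m=\mathbb{E}(\imath L_B+\tfrac12)^m=B_m(1)$ together with the difference relation $B_m(x+1)-B_m(x)=mx^{m-1}$ (which at $x=0$ gives $B_m(1)=B_m$ for $m\neq1$ and $B_1(1)=B_1+1$), or, more directly, from the symmetry of $L_B$ about $0$: $\mathbb{E}(\imath L_B+\tfrac12)^m=(-1)^m\mathbb{E}(\imath L_B-\tfrac12)^m=(-1)^mB_m$, and $(-1)^mB_m$ differs from $B_m$ only at $m=1$, where it contributes $+1$. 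Substituting $m=n-j$ brings \eqref{eq:E2} into the stated form $\sum_{j=0}^{k}\binom{k}{j}(-1)^j\{B_{n-j}+\delta_{n-j-1}\}$.

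Equating \eqref{eq:E1} and \eqref{eq:E2} then gives
\[
\sum_{j=0}^{n-k}\binom{n-k}{j}B_{j+k}
=\sum_{j=0}^{k}\binom{k}{j}(-1)^{j}B_{n-j}
+\sum_{j=0}^{k}\binom{k}{j}(-1)^{j}\delta_{n-j-1},
\]
so it remains only to evaluate the correction sum. The symbol $\delta_{n-j-1}$ is nonzero precisely at $j=n-1$, and this index lies in $\{0,\dots,k\}$ if and only if $k\geq n-1$. Hence the correction vanishes for $0\leq k\leq n-2$, equals $\binom{n-1}{n-1}(-1)^{n-1}=(-1)^{n-1}$ for $k=n-1$, and equals $\binom{n}{n-1}(-1)^{n-1}=n(-1)^{n-1}$ for $k=n$; these are exactly the three cases in the statement.

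There is no genuine obstacle here: the argument is essentially bookkeeping. The one point that deserves care is the localization of the Kronecker term, namely that $j=n-1$ is the unique contributing index and that whether it is captured by the binomial coefficient $\binom{k}{\,\cdot\,}$ is what forces the three regimes $k\leq n-2$, $k=n-1$, $k=n$ (and the accompanying constraints $n\geq2$, $n\geq1$, $n\geq0$, since outside these the relevant range of $k$ is empty). I would also sanity-check the small cases: for $n=0$ (which forces $k=0$) the third formula reads $B_0=B_0$ with the correction $n(-1)^{n-1}$ vanishing, and for $n=1$ the formulas for $k=0$ and $k=1$ reduce to the elementary $B_0=1$ and $B_1=-\tfrac12$.
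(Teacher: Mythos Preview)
Your proposal is correct and follows exactly the paper's approach: compute $\mathbb{E}\,\mathfrak{B}_{k,n}(X)$ via the two binomial expansions \eqref{eq:E1} and \eqref{eq:E2}, then track where the Kronecker term $\delta_{n-j-1}$ contributes. Your added justification of $\mathbb{E}(1+X)^m=B_m(1)=B_m+\delta_{m-1}$ and the small-case checks are welcome elaborations, but the argument is otherwise identical to the paper's.
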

We remark that the case $k=n$ reads
\[
B_{n}=\sum_{j=0}^{n}\binom{n}{j}\left(-1\right)^{j}B_{n-j}+n\left(-1\right)^{n-1}
\]

\subsection{polynomial identities}

From \eqref{eq:Bn(x)}, we deduce that the above results can be extended
to the case of Bernoulli polynomials by choosing $X=x+\imath L_{B}-\frac{1}{2}.$
We deduce the following
\begin{thm}
The Bernoulli polynomials satisfy, for all $0\le k\le n,$
\begin{equation}
\sum_{j=0}^{n-k}\binom{n-k}{j}B_{j+k}\left(x\right)=\sum_{j=0}^{k}\binom{k}{j}\left(-1\right)^{j}B_{n-j}\left(x\right)+\left(nx-\left(n-k\right)\right)x^{n-k-1}\left(x-1\right)^{k-1}\label{eq:Kim1}
\end{equation}
\end{thm}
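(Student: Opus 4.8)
The plan is to imitate the two-way computation of $\mathbb{E}\mathfrak{B}_{k,n}(X)$ that was used in the number case, but now with $X=x+\imath L_B-\tfrac12$, so that each occurrence of $B_m$ becomes $B_m(x)$ by \eqref{eq:Bn(x)}. The left-hand side comes for free: expanding $\mathfrak{B}_{k,n}(X)=X^k(1+X)^{n-k}=X^k\sum_{j=0}^{n-k}\binom{n-k}{j}X^j$ and taking expectations yields $\sum_{j=0}^{n-k}\binom{n-k}{j}B_{j+k}(x)$. So the entire content of the theorem is in re-deriving the right-hand side, i.e. in getting the correct polynomial correction term that replaces the three-case Kronecker bookkeeping of Theorem~1.

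First I would write $X=(1+X)-1$, expand $X^k=\sum_{j=0}^k\binom{k}{j}(-1)^j(1+X)^{k-j}$, multiply by $(1+X)^{n-k}$ and take the expectation, obtaining $\mathbb{E}\mathfrak{B}_{k,n}(X)=\sum_{j=0}^k\binom{k}{j}(-1)^j\,\mathbb{E}(1+X)^{n-j}$. The key subpoint is to evaluate $\mathbb{E}(1+X)^{m}$ for $X=x+\imath L_B-\tfrac12$. Since $1+X=(x+1)+\imath L_B-\tfrac12$, \eqref{eq:Bn(x)} gives $\mathbb{E}(1+X)^m=B_m(x+1)$, and then I would invoke the standard difference relation $B_m(x+1)-B_m(x)=m x^{m-1}$ to write $\mathbb{E}(1+X)^{n-j}=B_{n-j}(x)+(n-j)x^{n-j-1}$. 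Substituting back, the main sum reproduces $\sum_{j=0}^k\binom{k}{j}(-1)^jB_{n-j}(x)$ and leaves the correction $\sum_{j=0}^k\binom{k}{j}(-1)^j(n-j)x^{n-j-1}$.

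The remaining work — and the step I expect to be the genuine obstacle — is to show that this correction sum collapses to the closed form $\bigl(nx-(n-k)\bigr)x^{n-k-1}(x-1)^{k-1}$ claimed in \eqref{eq:Kim1}. I would handle it by splitting $\sum_{j=0}^k\binom{k}{j}(-1)^j(n-j)x^{n-j-1}$ as $n\sum_j\binom{k}{j}(-1)^jx^{n-j-1}-\sum_j\binom{k}{j}(-1)^j j\,x^{n-j-1}$; the first piece is $n x^{n-k-1}(x-1)^k$ by the binomial theorem (factoring $x^{n-k-1}$ out of $\sum_j\binom{k}{j}(-1)^j x^{k-j}$), and for the second I would use $j\binom{k}{j}=k\binom{k-1}{j-1}$ to reindex and get $-k x^{n-k}(x-1)^{k-1}$. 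Combining, $x^{n-k-1}(x-1)^{k-1}\bigl(n(x-1)-kx\bigr)\cdot(\text{sign check})$, and after collecting terms this is exactly $\bigl(nx-(n-k)\bigr)x^{n-k-1}(x-1)^{k-1}$.

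One should double-check the boundary cases $k=0$ and $k=n$ (where $x^{n-k-1}$ or $(x-1)^{k-1}$ carries a negative exponent): for $k=0$ the correction is $nx^{n-1}$, matching $B_n(x+1)-B_n(x)$ directly, and for $k=n$ it is $(nx-0)x^{-1}(x-1)^{n-1}\cdot$ hmm, rather the algebra gives $n(x-1)^{n-1}$, which is consistent with specializing $n\ge1$; these degenerate-exponent expressions are to be read as the polynomials obtained after the cancellation, not literally, and I would add a sentence to that effect. Apart from that caveat the proof is a routine binomial manipulation once the expectation step and the translation identity $B_m(x+1)-B_m(x)=mx^{m-1}$ are in place.
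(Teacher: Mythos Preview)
Your approach is essentially identical to the paper's: compute $\mathbb{E}\,\mathfrak{B}_{k,n}(X)$ two ways with $X=x+\imath L_B-\tfrac12$, use $\mathbb{E}(1+X)^{n-j}=B_{n-j}(x+1)$ together with $B_{n-j}(x+1)-B_{n-j}(x)=(n-j)x^{n-j-1}$, and then simplify the correction sum. The paper in fact stops at ``replacing in the above sum yields the result'' and does not display the binomial simplification you carry out; your added derivation (split off the factor $n$, use $j\binom{k}{j}=k\binom{k-1}{j-1}$, and recombine) is the right way to close that gap. Two small slips to fix before writing it up cleanly: the second piece after reindexing is $-B=k\,x^{\,n-k-1}(x-1)^{k-1}$ (not $x^{n-k}$), and the bracketed factor after combining is $n(x-1)+k=nx-(n-k)$ (not $n(x-1)-kx$); with those corrected the closed form drops out without any residual ``sign check.'' Your remark on the degenerate exponents at $k=0$ and $k=n$ is well taken and worth keeping.
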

\begin{proof}
The left-hand side is a direct consequence of that of \eqref{eq:E1};
the left-hand side of \eqref{eq:E2} with $X=x+\imath L_{B}-\frac{1}{2}$
yields
\[
\sum_{j=0}^{k}\binom{k}{j}\left(-1\right)^{j}B_{n-j}\left(x+1\right)
\]
and since $B_{n-j}\left(x+U\right)=x^{n-j},$ we deduce $B_{n-j}\left(x+1\right)-B_{n-j}\left(x\right)=\left(n-j\right)x^{n-j-1}.$
replacing in the above sum yields the result.
\end{proof}
We note that the case $k=0$ reads
\[
\sum_{j=0}^{n}\binom{n}{j}B_{j}\left(x\right)=B_{n}\left(x\right)+nx^{n-1},
\]
which can be restated as
\[
B_{n}\left(x+1\right)-B_{n}\left(x\right)=nx^{n-1}
\]
and is nothing but the expression of the cancellation principle
\[
\mathbb{E}B_{n-1}\left(x+U_{B}\right)=x^{n-1}.
\]

\section{A polynomial extension to Kim's identity}

In \cite{Kim1}, the following identity is derived using the Bernstein
polynomials $\mathcal{B}_{k,n}\left(x\right)=\binom{n}{k}x^{k}\left(1-x\right)^{n-k}$
\[
\sum_{j=0}^{n-k}\binom{n-k}{j}\left(-1\right)^{j}E_{j+k}=\sum_{j=0}^{k}\binom{k}{j}\left(-1\right)^{k-j}E_{n-j}+2\delta_{k}.
\]
We now provide the following polynomial extension of this identity
\begin{thm}
\label{thm:Kim_poly}The Euler polynomials satisfy
\[
\sum_{j=0}^{n-k}\binom{n-k}{j}\left(-1\right)^{j}E_{j+k}\left(x\right)=\left(-1\right)^{n+k+1}\sum_{j=0}^{k}\binom{k}{j}E_{n-j}\left(x\right)+2x^{k}\left(1-x\right)^{n-k}.
\]
\end{thm}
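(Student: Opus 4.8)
The plan is to run the two-way computation of Section~2 with the Bernstein polynomials in place of the Beta polynomials. Set $X=x+\imath L_{E}-\frac{1}{2}$, so that $\mathbb{E}X^{m}=E_{m}\left(x\right)$, and evaluate $\mathbb{E}\,X^{k}\left(1-X\right)^{n-k}=\binom{n}{k}^{-1}\mathbb{E}\,\mathcal{B}_{k,n}\left(X\right)$ in two ways. The first is the plain binomial expansion of $\left(1-X\right)^{n-k}$, which gives
\[
\mathbb{E}\,X^{k}\left(1-X\right)^{n-k}=\sum_{j=0}^{n-k}\binom{n-k}{j}\left(-1\right)^{j}\mathbb{E}X^{j+k}=\sum_{j=0}^{n-k}\binom{n-k}{j}\left(-1\right)^{j}E_{j+k}\left(x\right),
\]
i.e.\ the left-hand side of the claimed identity.

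For the second evaluation I would instead expand the factor $X^{k}$, writing $X=1-\left(1-X\right)$ so that $X^{k}=\sum_{j=0}^{k}\binom{k}{j}\left(-1\right)^{j}\left(1-X\right)^{j}$, hence
\[
\mathbb{E}\,X^{k}\left(1-X\right)^{n-k}=\sum_{j=0}^{k}\binom{k}{j}\left(-1\right)^{j}\,\mathbb{E}\left(1-X\right)^{n-k+j}.
\]
The key point is the evaluation of $\mathbb{E}\left(1-X\right)^{m}$: using the symmetry $L_{E}\stackrel{d}{=}-L_{E}$ of the hyperbolic secant law one has $1-X=\left(1-x\right)-\left(\imath L_{E}-\frac{1}{2}\right)\stackrel{d}{=}\left(2-x\right)+\imath L_{E}-\frac{1}{2}$, whence $\mathbb{E}\left(1-X\right)^{m}=E_{m}\left(2-x\right)$. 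Applying the cancellation identity of the Lemma in the form $E_{m}\left(y+1\right)+E_{m}\left(y\right)=2y^{m}$ at $y=1-x$, and then the reflection formula $E_{m}\left(1-x\right)=\left(-1\right)^{m}E_{m}\left(x\right)$ (itself a one-line consequence of $L_{E}\stackrel{d}{=}-L_{E}$), one gets $\mathbb{E}\left(1-X\right)^{m}=2\left(1-x\right)^{m}-\left(-1\right)^{m}E_{m}\left(x\right)$.

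Substituting $m=n-k+j$, noting that $\left(-1\right)^{j}\left(-1\right)^{n-k+j}=\left(-1\right)^{n-k}$, summing the first batch of terms via $\sum_{j=0}^{k}\binom{k}{j}\left(x-1\right)^{j}=x^{k}$, and reindexing $j\mapsto k-j$ so that $\sum_{j}\binom{k}{j}E_{n-k+j}\left(x\right)=\sum_{j}\binom{k}{j}E_{n-j}\left(x\right)$, the second evaluation becomes
\[
\mathbb{E}\,X^{k}\left(1-X\right)^{n-k}=2x^{k}\left(1-x\right)^{n-k}+\left(-1\right)^{n+k+1}\sum_{j=0}^{k}\binom{k}{j}E_{n-j}\left(x\right),
\]
using $\left(-1\right)^{n-k}\left(x-1\right)^{n-k}=\left(1-x\right)^{n-k}$ and $-\left(-1\right)^{n-k}=\left(-1\right)^{n+k+1}$. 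Equating the two evaluations proves the theorem.

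There is no deep difficulty; the only delicate part is this last step of sign and index bookkeeping. The genuinely new ingredient relative to the Bernoulli computation of Section~2 is the reflection symmetry of the hyperbolic secant distribution --- it is precisely this symmetry that produces the overall sign $\left(-1\right)^{n+k+1}$ and makes the Euler identity differ in form from its Bernoulli analogue. As a sanity check, taking $k=n$ and $x=0$ the identity collapses to $E_{n}\left(1\right)+E_{n}\left(0\right)=2\cdot0^{n}$, a special case of the cancellation principle.
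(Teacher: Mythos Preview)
Your proof is correct and follows essentially the same route as the paper: compute $\mathbb{E}\,X^{k}(1-X)^{n-k}$ two ways, the second via $X=1-(1-X)$. The only cosmetic difference is in evaluating $\mathbb{E}(1-X)^{m}$: the paper writes $1-X=-\bigl((x-1)+\imath L_{E}-\tfrac12\bigr)$ algebraically to obtain $(-1)^{m}E_{m}(x-1)$ directly and then applies cancellation once, whereas you invoke the distributional symmetry $L_{E}\stackrel{d}{=}-L_{E}$ to get $E_{m}(2-x)$ and then use cancellation together with the reflection formula $E_{m}(1-x)=(-1)^{m}E_{m}(x)$; the two routes are equivalent (indeed the reflection formula is precisely the symmetry statement).
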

\begin{proof}
We start from the identity
\[
\sum_{j=0}^{n-k}\binom{n-k}{j}\left(-1\right)^{j}x^{j+k}=\sum_{j=0}^{k}\binom{k}{j}\left(-1\right)^{k-j}\left(1-x\right)^{n-j}
\]
obtained by expanding either (left-hand side) the $\left(1-x\right)^{n-k}$
term of the (right-hand side) $x^{k}=\left(x-1+1\right)^{k}$ in the
expression of the Bernstein polynomial. Replacing the variable $x$
by $x=X+\imath L_{E}-\frac{1}{2}$ and remarking that
\[
\mathbb{E}\left(1-x\right)^{n-j}=\mathbb{E}\left(1-X-\left(\imath L_{E}-\frac{1}{2}\right)\right)^{n-j}=\left(-1\right)^{n-j}E_{n-j}\left(X-1\right)
\]
with, by the cancellation principle, 
\[
E_{n-j}\left(X-1\right)+E_{n-j}\left(X\right)=2\left(X-1\right)^{n-j},
\]
we deduce
\begin{eqnarray*}
\sum_{j=0}^{k}\binom{k}{j}\left(-1\right)^{k-j}\left(1-x\right)^{n-j} & = & \sum_{j=0}^{k}\binom{k}{j}\left(-1\right)^{k-j}\left(-1\right)^{n-j}\left\{ -E_{n-j}\left(X\right)+2\left(X-1\right)^{n-j}\right\} \\
 & = & \left(-1\right)^{n+k+1}\sum_{j=0}^{k}\binom{k}{j}E_{n-j}\left(X\right)+2\sum_{j=0}^{k}\binom{k}{j}\left(-1\right)^{k-j}\left(1-X\right)^{n-j}
\end{eqnarray*}
this last sum being equal to $2X^{k}\left(1-X\right)^{n-k},$ hence
the result.
\end{proof}
We notice that the case $X=0$ is
\[
\sum_{j=0}^{n-k}\binom{n-k}{j}\left(-1\right)^{j}E_{j+k}=\left(-1\right)^{n+k+1}\sum_{j=0}^{k}\binom{k}{j}E_{n-j}+2\delta^{k}.
\]
It can be shown that 
\[
\left(-1\right)^{n+k+1}\sum_{j=0}^{k}\binom{k}{j}E_{n-j}=\sum_{j=0}^{k}\binom{k}{j}\left(-1\right)^{k-j}E_{n-j}
\]
as follows: using the moment representation \eqref{eq:En} the right-hand
side reads
\[
\mathbb{E}\left(\imath L_{E}-\frac{1}{2}\right)^{n-k}\left(1-\left(\imath L_{E}-\frac{1}{2}\right)\right)^{k}=\mathbb{E}\left(-\imath L_{E}-\frac{1}{2}\right)^{n-k}\left(1-\left(-\imath L_{E}-\frac{1}{2}\right)\right)^{k}
\]
by the symmetry of the hyperbolic secant distribution, and is thus
equal to 
\[
\left(-1\right)^{n-k}\mathbb{E}\left(\imath L_{E}+\frac{1}{2}\right)^{n-k}\left(\imath L_{E}+\frac{3}{2}\right)^{k}=\mathbb{E}f\left(\left(\imath L_{E}-\frac{1}{2}\right)+1\right)
\]
with $f\left(x\right)=\left(-1\right)^{n-k}x^{n-k}\left(x+1\right)^{k}$.
By the cancellation principle
\[
\mathbb{E}f\left(\left(\imath L_{E}-\frac{1}{2}\right)+1\right)+f\left(\imath L_{E}-\frac{1}{2}\right)=2f\left(0\right)=0
\]
so that the right-hand side if equal to $-\mathbb{E}f\left(\imath L_{E}-\frac{1}{2}\right)$
which coincides with the left-hand side; hence we recover Kim's identity.

\section{Multidimensional Volkenborn integral}

\subsection{Introduction}

In \cite{Kim2}, a multivariate version of the Volkenborn integral
is defined as 
\[
\int f\left(\mathbf{x}\right)d\mu_{0}\left(\mathbf{x}\right)=\int\dots\int f\left(x_{1},\dots,x_{k}\right)d\mu_{0}\left(x_{1}\right)\dots d\mu_{0}\left(x_{k}\right).
\]
In particular, it satisfies, with $\mathbf{y}\in\mathbb{R}^{k}$ and
the notation $\vert\mathbf{y}\vert=\sum_{i=1}^{k}y_{i},$
\[
\int_{\mathbb{Z}_{p}^{k}}e^{\left(x+\vert\mathbf{y}\vert\right)t}d\mu_{0}\left(\mathbf{y}\right)=\left(\frac{t}{e^{t}-1}\right)^{k}e^{xt}.
\]
This multivariate version of the Volkenborn integral can again be
expressed as an expectation over a simple random variable as shown
now.

\subsection{Moment representation and elementary properties}

The Bernoulli polynomials $B_{n}^{\left(k\right)}\left(x\vert\mathbf{a}\right)$
of order $k$ and degree $n$ with $x\in\mathbb{R}$ with parameter
$\mathbf{a}\in\mathbb{R}^{k}$, also called Nörlund polynomials, $\mathbf{}$are
defined by the generating function \cite[1.13.1]{Bateman2}
\[
\sum_{n=0}^{+\infty}B_{n}^{\left(k\right)}\left(x\vert\mathbf{a}\right)\frac{t^{n}}{n!}=e^{xt}\prod_{j=1}^{k}\left(\frac{a_{j}t}{e^{a_{j}t}-1}\right)
\]
and the corresponding Bernoulli numbers $B_{n}^{\left(k\right)}\left(\mathbf{a}\right)$
by
\[
B_{n}^{\left(k\right)}\left(\mathbf{a}\right)=B_{n}^{\left(k\right)}\left(0\vert\mathbf{a}\right)=\prod_{j=1}^{k}\left(\frac{a_{j}t}{e^{a_{j}t}-1}\right).
\]
 In particular, taking $a_{j}=1$ for all $j\in\left[1,k\right]$
and denoting
\[
B_{n}^{\left(k\right)}\left(x\right)=B_{n}^{\left(k\right)}\left(x\vert1,1,\dots,1\right)
\]
 we deduce
\[
\int_{\mathbb{Z}_{p}^{k}}e^{\left(x+\vert\mathbf{y}\vert\right)t}d\mu_{0}\left(\mathbf{y}\right)=\sum_{n=0}^{+\infty}B_{n}^{\left(k\right)}\left(x\right)\frac{t^{n}}{n!}.
\]

We provide a multidimensional extension of the moment representation
\eqref{eq:Bn(x)} as follows
\begin{thm}
The Bernoulli polynomials $B_{n}^{\left(k\right)}\left(x\vert\mathbf{a}\right)$
satisfy
\begin{equation}
B_{n}^{\left(k\right)}\left(x\vert\mathbf{a}\right)=\mathbb{E}\left(x+\sum_{j=1}^{k}a_{j}\left(\imath L_{B}^{\left(j\right)}-\frac{1}{2}\right)\right)^{n}\label{eq:Bnk(x)expectation}
\end{equation}
where the random variables $\left\{ L_{B}^{\left(j\right)}\right\} _{1\le j\le k}$
are independent and follow the logistic distribution \eqref{eq:logistic}. 

As a consequence, the Bernoulli numbers $B_{n}^{\left(k\right)}\left(\mathbf{a}\right)$
satisfy
\begin{equation}
B_{n}^{\left(k\right)}\left(\mathbf{a}\right)=\mathbb{E}\left(\sum_{j=1}^{k}a_{k}\left(\imath L_{B}^{\left(j\right)}-\frac{1}{2}\right)\right)^{n}\label{eq:Bnkexpectation}
\end{equation}
and the multivariate Volkenborn integral, with $\mathbf{x}\in\mathbb{R}^{k},$
\[
\int_{\mathbb{Z}_{p}^{k}}f\left(\vert\mathbf{x}\vert\right)d\mu_{0}\left(\mathbf{x}\right)=\mathbb{E}f\left(\sum_{j=1}^{k}\left(x_{j}+\imath L_{B}^{\left(j\right)}-\frac{1}{2}\right)\right).
\]
\end{thm}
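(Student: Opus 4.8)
The plan is to establish the moment representation \eqref{eq:Bnk(x)expectation} first and then read off the two corollaries. The starting point is the one-dimensional identity \eqref{eq:Bn(x)}, namely $B_n(x)=\mathbb{E}(x+\imath L_B-\tfrac12)^n$, or equivalently at the level of characteristic functions $\mathbb{E}\exp(t(\imath L_B-\tfrac12))=\frac{t}{2}\operatorname{csch}(\tfrac t2)=\frac{t}{e^t-1}$, which is exactly the generating function of the $B_n$. Since scaling a random variable scales its characteristic-function argument, $\mathbb{E}\exp(a_j t(\imath L_B^{(j)}-\tfrac12))=\frac{a_j t}{e^{a_j t}-1}$. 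Using independence of the $L_B^{(j)}$, the characteristic function of the sum $S:=x+\sum_{j=1}^k a_j(\imath L_B^{(j)}-\tfrac12)$ factorizes:
\[
\mathbb{E}\exp(tS)=e^{xt}\prod_{j=1}^k\frac{a_j t}{e^{a_j t}-1},
\]
which is precisely the generating function defining $B_n^{(k)}(x\vert\mathbf a)$. Matching Taylor coefficients in $t$ (legitimate because $f$ is analytic near $0$, equivalently because both sides are entire/meromorphic with matching expansions at $0$) gives $B_n^{(k)}(x\vert\mathbf a)=\mathbb{E}\,S^n$, which is \eqref{eq:Bnk(x)expectation}.

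The first consequence \eqref{eq:Bnkexpectation} is immediate: set $x=0$ in \eqref{eq:Bnk(x)expectation}, using $B_n^{(k)}(\mathbf a)=B_n^{(k)}(0\vert\mathbf a)$. For the multivariate Volkenborn integral, I would specialize $a_j=1$ for all $j$ and appeal to the one-dimensional probabilistic representation \eqref{eq:probabilistic} applied coordinatewise. Concretely, for an analytic test function $f$, write $f(\vert\mathbf x\vert)=f(x_1+\dots+x_k)$ and peel off the integrals one variable at a time: $\int f(x_1+\dots+x_k)\,d\mu_0(x_k)=\mathbb{E}\,f\bigl((x_1+\dots+x_{k-1})+x_k+\imath L_B^{(k)}-\tfrac12\bigr)$ by \eqref{eq:probabilistic} with the first $k-1$ variables frozen; iterating and using independence of the $L_B^{(j)}$ collapses the $k$-fold integral to $\mathbb{E}f\bigl(\sum_{j=1}^k(x_j+\imath L_B^{(j)}-\tfrac12)\bigr)$. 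Alternatively, expand $f$ in its Taylor series and apply \eqref{eq:Bnk(x)expectation} with $\mathbf a=(1,\dots,1)$ and the shift $x\mapsto\vert\mathbf x\vert=\sum x_j$ termwise.

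The only genuinely delicate point is the passage from equality of generating functions (formal power series in $t$) to equality of the individual coefficients $B_n^{(k)}(x\vert\mathbf a)$ and the moments $\mathbb{E}\,S^n$. This requires knowing that $\mathbb{E}\exp(tS)$ is finite for $t$ in a neighborhood of $0$ and is represented there by its Taylor series; this follows because each $L_B^{(j)}$ is logistic, whose moment generating function $\frac{t}{2}\operatorname{csch}(\tfrac t2)$ is analytic on $\vert t\vert<2\pi$, so the finite product is analytic near $0$ and the interchange of expectation and series is justified by dominated convergence on a small disc. The rest is bookkeeping: the factorization over independent summands and the coordinatewise peeling for the Volkenborn integral are routine once the one-dimensional case is in hand.
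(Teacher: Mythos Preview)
Your argument is correct and is essentially the same as the paper's: compute the exponential generating function of the moments $\mathbb{E}\,S^{n}$ by independence and the one-variable formula $\mathbb{E}e^{t(\imath L_B-1/2)}=\frac{t}{e^{t}-1}$, then identify it with the defining generating function of $B_{n}^{(k)}(x\vert\mathbf a)$. You in fact add more care than the paper does (the analyticity/dominated-convergence remark and the explicit coordinatewise peeling for the multivariate Volkenborn integral), but the underlying idea is identical.
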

\begin{proof}
Let us compute the generating function
\[
\sum_{n=0}^{+\infty}\mathbb{E}\left(x+\sum_{j=1}^{k}a_{j}\left(\imath L_{B}^{\left(j\right)}-\frac{1}{2}\right)\right)^{n}\frac{t^{n}}{n!}=\mathbb{E}\exp\left(tx+t\sum_{j=1}^{k}a_{j}\left(\imath L_{B}^{\left(j\right)}-\frac{1}{2}\right)\right)=e^{tx}\prod_{j=1}^{k}\mathbb{E}e^{ta_{j}\left(\imath L_{B}^{\left(j\right)}-\frac{1}{2}\right)}
\]
with, for a logistic distributed random variable $L_{j},$
\[
\mathbb{E}e^{ta_{j}\left(iL_{B}^{\left(j\right)}-\frac{1}{2}\right)}=\frac{a_{j}t}{e^{a_{j}t}-1}
\]
hence the result.
\end{proof}
The moment representation \eqref{eq:Bnk(x)expectation} allows to
recover easily some well-known results about the higher-order Bernoulli
polynomials.
\begin{prop}
The higher-order Bernoulli polynomials satisfy the identities

\begin{equation}
B_{n}^{\left(1\right)}\left(x\vert a\right)=a^{n}B_{n}\left(\frac{x}{a}\right),\label{eq:prop42}
\end{equation}
\begin{equation}
\sum_{l=0}^{n}\binom{n}{l}x^{l}B_{n-l}^{\left(k\right)}\left(y\vert\mathbf{a}\right)=B_{n}^{\left(k\right)}\left(x+y\vert\mathbf{a}\right)\label{eq:prop43}
\end{equation}
and
\begin{equation}
B_{2n+1}^{\left(k\right)}\left(\frac{a_{1}+\dots+a_{k}}{2}\vert\mathbf{a}\right)=0.\label{eq:prop44}
\end{equation}
\end{prop}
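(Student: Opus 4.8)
The plan is to read all three identities directly off the moment representation \eqref{eq:Bnk(x)expectation}. Throughout I would write $S=\sum_{j=1}^{k}a_{j}\left(\imath L_{B}^{\left(j\right)}-\frac{1}{2}\right)$, with the $L_{B}^{\left(j\right)}$ independent and logistically distributed, so that $B_{n}^{\left(k\right)}\left(x\vert\mathbf{a}\right)=\mathbb{E}\left(x+S\right)^{n}$. For \eqref{eq:prop42} I would set $k=1$ and pull the scalar $a$ out of the $n$-th power: $B_{n}^{\left(1\right)}\left(x\vert a\right)=\mathbb{E}\left(x+a\left(\imath L_{B}-\frac{1}{2}\right)\right)^{n}=a^{n}\,\mathbb{E}\left(\frac{x}{a}+\imath L_{B}-\frac{1}{2}\right)^{n}$, and the remaining expectation is exactly $B_{n}\!\left(\frac{x}{a}\right)$ by \eqref{eq:Bn(x)}. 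For \eqref{eq:prop43} I would expand by the binomial theorem under the expectation, $\left(x+y+S\right)^{n}=\sum_{l=0}^{n}\binom{n}{l}x^{l}\left(y+S\right)^{n-l}$, take expectations of this finite sum term by term, and recognize $\mathbb{E}\left(y+S\right)^{n-l}=B_{n-l}^{\left(k\right)}\left(y\vert\mathbf{a}\right)$; since the sum is finite there is nothing to justify about interchanging summation and expectation.

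For \eqref{eq:prop44} I would substitute $x=\frac{\vert\mathbf{a}\vert}{2}=\frac{1}{2}\sum_{j=1}^{k}a_{j}$, at which point the $-\frac{1}{2}$ terms inside $S$ cancel this shift exactly and one is left with $B_{2n+1}^{\left(k\right)}\!\left(\frac{\vert\mathbf{a}\vert}{2}\,\big\vert\,\mathbf{a}\right)=\mathbb{E}\left(\imath\sum_{j=1}^{k}a_{j}L_{B}^{\left(j\right)}\right)^{2n+1}=\imath^{2n+1}\,\mathbb{E}\left(\sum_{j=1}^{k}a_{j}L_{B}^{\left(j\right)}\right)^{2n+1}$. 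Then I would invoke symmetry: each $L_{B}^{\left(j\right)}$ is symmetric about $0$ because the density \eqref{eq:logistic} is even, the $L_{B}^{\left(j\right)}$ are independent, hence the linear combination $\sum_{j}a_{j}L_{B}^{\left(j\right)}$ is itself symmetric about $0$, so all of its odd moments are zero — and they are finite since the logistic law has exponentially decaying tails. This gives \eqref{eq:prop44} and completes the proposition.

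The main point deserving care — rather than a genuine obstacle, since all three identities become immediate once \eqref{eq:Bnk(x)expectation} is in hand — is the symmetry step in \eqref{eq:prop44}: I would want to note explicitly that a finite sum of independent symmetric random variables is again symmetric (equivalently, its characteristic function stays real-valued and even), which forces every odd moment to vanish. Everything else is the binomial theorem together with linearity of the expectation.
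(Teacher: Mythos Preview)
Your proposal is correct and follows exactly the paper's own argument: each identity is read directly from the moment representation \eqref{eq:Bnk(x)expectation}, with \eqref{eq:prop42} by factoring out $a^{n}$, \eqref{eq:prop43} by a binomial expansion, and \eqref{eq:prop44} by reducing to an odd moment of a symmetric random variable. Your added remarks on finiteness of moments and on why a sum of independent symmetric variables is symmetric simply make explicit what the paper leaves implicit.
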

\begin{proof}
Identity \eqref{eq:prop42} is a direct consequence of the moment
representation \eqref{eq:Bnk(x)expectation}; identity \eqref{eq:prop43}
is obtained using a binomial expansion of \eqref{eq:Bnk(x)expectation}
and identity \eqref{eq:prop44} by computing 
\[
B_{2n+1}^{\left(k\right)}\left(\frac{a_{1}+\dots+a_{k}}{2}\vert\mathbf{a}\right)=\mathbb{E}\left(\imath\sum_{l=1}^{k}a_{l}L_{B}^{\left(l\right)}\right)^{2n+1}
\]
and using the fact that the logistic density \eqref{eq:logistic}
is an even function.
\end{proof}
We also deduce straightforwardly from a multinomial expansion of the
representations \eqref{eq:Bnk(x)expectation} and \eqref{eq:Bnkexpectation}
the following
\begin{prop}
The higher-order Bernoulli polynomials satisfy
\[
B_{n}^{\left(k\right)}\left(x_{1}+\dots+x_{k}\vert\mathbf{a}\right)=\sum_{i_{1}+\dots+i_{k}=n}\binom{n}{i_{1},\dots,i_{k}}B_{i_{1}}\left(x_{1}\vert a_{1}\right)\dots B_{i_{k}}\left(x_{k}\vert a_{k}\right)
\]
and the higher-order Bernoulli numbers 
\[
B_{n}^{\left(k\right)}\left(\mathbf{a}\right)=\sum_{i_{1}+\dots+i_{k}=n}\binom{n}{i_{1},\dots,i_{k}}B_{i_{1}}\left(a_{1}\right)\dots B_{i_{k}}\left(a_{k}\right)
\]

\end{prop}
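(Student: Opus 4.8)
The plan is to read off both identities directly from the probabilistic moment representations \eqref{eq:Bnk(x)expectation} and \eqref{eq:Bnkexpectation}, using nothing beyond the multinomial theorem and the independence of the auxiliary variables $L_{B}^{\left(1\right)},\dots,L_{B}^{\left(k\right)}$. Set $X_{j}=x_{j}+a_{j}\left(\imath L_{B}^{\left(j\right)}-\frac{1}{2}\right)$ for $1\le j\le k$, so that \eqref{eq:Bnk(x)expectation} reads
\[
B_{n}^{\left(k\right)}\left(x_{1}+\dots+x_{k}\vert\mathbf{a}\right)=\mathbb{E}\left(X_{1}+\dots+X_{k}\right)^{n}.
\]

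First I would expand the $n$-th power by the multinomial theorem and interchange the resulting finite sum with the expectation,
\[
\mathbb{E}\left(X_{1}+\dots+X_{k}\right)^{n}=\sum_{i_{1}+\dots+i_{k}=n}\binom{n}{i_{1},\dots,i_{k}}\mathbb{E}\left(X_{1}^{i_{1}}\cdots X_{k}^{i_{k}}\right);
\]
then, since the $L_{B}^{\left(j\right)}$ are independent, so are the $X_{j}$, and the expectation of the product splits,
\[
\mathbb{E}\left(X_{1}^{i_{1}}\cdots X_{k}^{i_{k}}\right)=\prod_{j=1}^{k}\mathbb{E}X_{j}^{i_{j}};
\]
finally, each factor $\mathbb{E}X_{j}^{i_{j}}=\mathbb{E}\left(x_{j}+a_{j}\left(\imath L_{B}^{\left(j\right)}-\frac{1}{2}\right)\right)^{i_{j}}$ is precisely the $k=1$ instance of \eqref{eq:Bnk(x)expectation}, hence equals the first-order N\"orlund polynomial $B_{i_{j}}\left(x_{j}\vert a_{j}\right)$ (equivalently $a_{j}^{i_{j}}B_{i_{j}}\left(x_{j}/a_{j}\right)$ by \eqref{eq:prop42}). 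Substituting yields the first displayed identity. The second one follows by setting $x_{1}=\dots=x_{k}=0$, or equally well by running the same three steps starting from \eqref{eq:Bnkexpectation}, since then every factor reduces to $\mathbb{E}\left(a_{j}\left(\imath L_{B}^{\left(j\right)}-\frac{1}{2}\right)\right)^{i_{j}}=B_{i_{j}}\left(a_{j}\right)$.

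I do not expect any genuine obstacle here: the only tools are the interchange of a finite sum with an expectation and the factorization of the expectation of a product of independent random variables, both entirely routine. The one point that deserves an explicit sentence is the identification of the one-dimensional factor $\mathbb{E}X_{j}^{i_{j}}$ with $B_{i_{j}}\left(x_{j}\vert a_{j}\right)$ through the $k=1$ case of the moment representation, as this is what fixes the meaning of the notation on the right-hand side of the claimed identities.
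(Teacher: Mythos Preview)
Your proposal is correct and follows exactly the route the paper indicates: a multinomial expansion of the moment representations \eqref{eq:Bnk(x)expectation} and \eqref{eq:Bnkexpectation}, together with independence of the $L_{B}^{\left(j\right)}$ to factor the resulting expectations. The paper gives no further details, so your write-up is already more explicit than the original.
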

These results extend Corollary 5 and Corollary 6 in \cite{Kim2} which
correspond to the case $\mathbf{a}=\left(1,\dots,1\right)$.

\subsection{Kim's identity for Nörlund polynomials}

In order to highlight the efficiency of the moment representation
\eqref{eq:Bnk(x)expectation}, we derive now an extension of Kim's
identity \eqref{eq:Kim1} to the case of Nörlund polynomials as follows.
\begin{thm}
For $p\in\mathbb{N}$ and $0\le k\le n,$ 
\[
\sum_{j=0}^{n-k}\binom{n-k}{j}B_{j+k}^{\left(p\right)}\left(x\right)=\sum_{j=0}^{k}\binom{k}{j}\left(-1\right)^{j}\left\{ B_{n-j}^{\left(p\right)}\left(x\right)+\left(n-j\right)B_{n-j-1}^{\left(p-1\right)}\left(x\right)\right\} .
\]
\end{thm}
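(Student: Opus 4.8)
The plan is to reuse, in the higher-order setting, the ``evaluate a Beta polynomial two ways'' device behind identity \eqref{eq:Kim1}. Put
\[
X=x+\sum_{j=1}^{p}\left(\imath L_{B}^{(j)}-\frac{1}{2}\right),
\]
with $L_{B}^{(1)},\dots,L_{B}^{(p)}$ independent logistic variables \eqref{eq:logistic}, so that by the moment representation \eqref{eq:Bnk(x)expectation} with $\mathbf{a}=(1,\dots,1)$ one has $\mathbb{E}X^{m}=B_{m}^{(p)}(x)$ for all $m\ge0$. The binomial computation \eqref{eq:E1} then applies verbatim to $\mathfrak{B}_{k,n}(X)=X^{k}(1+X)^{n-k}$ and yields
\[
\mathbb{E}\,\mathfrak{B}_{k,n}(X)=\sum_{j=0}^{n-k}\binom{n-k}{j}B_{j+k}^{(p)}(x),
\]
the left-hand side of the stated identity.

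For the second evaluation, write $X=(X+1)-1$ exactly as in \eqref{eq:E2}. Because $1+X=(x+1)+\sum_{j=1}^{p}\left(\imath L_{B}^{(j)}-\frac{1}{2}\right)$ is again of the form handled by \eqref{eq:Bnk(x)expectation}, this gives
\[
\mathbb{E}\,\mathfrak{B}_{k,n}(X)=\sum_{j=0}^{k}\binom{k}{j}(-1)^{j}\,\mathbb{E}(1+X)^{n-j}=\sum_{j=0}^{k}\binom{k}{j}(-1)^{j}B_{n-j}^{(p)}(x+1).
\]
Comparing the two evaluations reduces the theorem to the ``translation'' formula
\[
B_{m}^{(p)}(x+1)=B_{m}^{(p)}(x)+m\,B_{m-1}^{(p-1)}(x),\qquad m\ge0,
\]
read with the convention $B_{m}^{(0)}(x)=x^{m}$ (the empty product in the N\"orlund generating function); substituting it with $m=n-j$ produces precisely the right-hand side of the theorem.

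The only real step to establish is that translation formula, which I would prove at the level of characteristic functions in the spirit of the Lemma. Take $Z=x+\sum_{j=1}^{p-1}\left(\imath L_{B}^{(j)}-\frac{1}{2}\right)$, so that $\mathbb{E}Z^{m}=B_{m}^{(p-1)}(x)$, and let $S=\imath L_{B}^{(p)}-\frac{1}{2}$ be independent of $Z$. From $\mathbb{E}e^{tS}=\frac{t}{e^{t}-1}$ one gets
\[
\mathbb{E}e^{t(1+S)}=e^{t}\,\mathbb{E}e^{tS}=\frac{t\,e^{t}}{e^{t}-1}=t+\frac{t}{e^{t}-1}=t+\mathbb{E}e^{tS},
\]
and multiplying by $\mathbb{E}e^{tZ}$ and using independence,
\[
\sum_{m\ge0}B_{m}^{(p)}(x+1)\frac{t^{m}}{m!}=\mathbb{E}e^{t(Z+1+S)}=t\,\mathbb{E}e^{tZ}+\mathbb{E}e^{t(Z+S)}=\sum_{m\ge0}\left(m\,B_{m-1}^{(p-1)}(x)+B_{m}^{(p)}(x)\right)\frac{t^{m}}{m!};
\]
matching the coefficients of $t^{m}/m!$ gives the formula, and substituting it back completes the argument.

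I do not expect a genuine obstacle here: the subtle point is simply that, in contrast with \eqref{eq:E2}, no Kronecker-delta corrections survive, the ``missing'' contributions being absorbed cleanly into $B_{n-j-1}^{(p-1)}(x)$. As a consistency check, for $p=1$ one has $B_{n-j-1}^{(0)}(x)=x^{n-j-1}$, and a short computation shows $\sum_{j=0}^{k}\binom{k}{j}(-1)^{j}(n-j)x^{n-j-1}=(nx-(n-k))\,x^{n-k-1}(x-1)^{k-1}$, so the statement degenerates exactly to \eqref{eq:Kim1}.
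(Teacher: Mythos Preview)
Your proof is correct and follows essentially the same route as the paper: both start from the algebraic identity $\sum_{j}\binom{n-k}{j}X^{j+k}=\sum_{j}\binom{k}{j}(-1)^{j}(1+X)^{n-j}$, substitute $X=x+\sum_{l=1}^{p}(\imath L_{B}^{(l)}-\tfrac12)$, and then reduce everything to the translation formula $B_{m}^{(p)}(x+1)=B_{m}^{(p)}(x)+m\,B_{m-1}^{(p-1)}(x)$. The only cosmetic difference is the justification of that last formula: the paper derives it from the cancellation principle $\mathbb{E}_{U_{B}}B_{m-1}^{(p)}(x+U_{B})=B_{m-1}^{(p-1)}(x)$ (together with the antiderivative relation), whereas you compute it directly at the level of characteristic functions via $\mathbb{E}e^{t(1+S)}=t+\mathbb{E}e^{tS}$; the two arguments are equivalent.
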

\begin{proof}
We start from the identity
\begin{equation}
\sum_{j=0}^{n-k}\binom{n-k}{j}X^{j+k}=\sum_{j=0}^{k}\binom{k}{j}\left(-1\right)^{j}\left(1+X\right)^{n-j}\label{eq:identity}
\end{equation}
and replace $X$ by $x+\sum_{l=1}^{p}\left(\imath L_{B}^{\left(l\right)}-\frac{1}{2}\right)$
so that, from \ref{eq:Bnk(x)expectation}, the left-hand side reads
\[
\sum_{j=0}^{n-k}\binom{n-k}{j}B_{j+k}^{\left(p\right)}\left(x\right)
\]
while the right-hand side is
\[
\sum_{j=0}^{k}\binom{k}{j}\left(-1\right)^{j}B_{n-j}^{\left(p\right)}\left(x+1\right).
\]
Since $\mathbb{E}B_{n-j-1}^{\left(p\right)}\left(x+U\right)=B_{n-j-1}^{\left(p-1\right)}\left(x\right)$
with $U$ uniform on $\left[0,1\right],$ we deduce by the cancellation
principle
\[
\frac{B_{n-j}^{\left(p\right)}\left(x+1\right)-B_{n-j}^{\left(p\right)}\left(x\right)}{n-j}=B_{n-j-1}^{\left(p-1\right)}\left(x\right)
\]
 which yields the final result.
\end{proof}

\subsection{Kim's identity extended to multidimensional Euler polynomials}

We now provide a multidimensional version of the polynomial Kim identity
derived in Theorem \ref{thm:Kim_poly} as follows:
\begin{thm}
The multidimensional Euler polynomials satisfy the identity
\begin{eqnarray*}
\sum_{j=0}^{n-k}\binom{n-k}{j}\left(-1\right)^{j}E_{j+k}^{\left(p\right)}\left(x\right) & = & \left(-1\right)^{n+k+1}\sum_{j=0}^{k}\binom{k}{j}E_{n-j}^{\left(p\right)}\left(x\right)\\
 &  & +2\left(-1\right)^{n+k}\sum_{j=0}^{k}\binom{k}{j}E_{n-j}^{\left(p-1\right)}\left(x-1\right)
\end{eqnarray*}
\end{thm}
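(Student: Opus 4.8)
The plan is to follow the proof of Theorem~\ref{thm:Kim_poly}, replacing the single random variable $\imath L_{E}-\frac{1}{2}$ by a sum of $p$ independent copies and then invoking a higher-order cancellation principle to re-express the terms that appear. Two preliminary facts are needed. First, the higher-order analogue of the moment representation: with $L_{E}^{\left(1\right)},\dots,L_{E}^{\left(p\right)}$ independent and hyperbolic secant distributed \eqref{eq:logistic-1}, the same generating-function computation as in the higher-order Bernoulli case gives
\[
E_{n}^{\left(p\right)}\left(x\right)=\mathbb{E}\Bigl(x+\sum_{l=1}^{p}\bigl(\imath L_{E}^{\left(l\right)}-\tfrac{1}{2}\bigr)\Bigr)^{n},
\]
since $\mathbb{E}e^{t\left(\imath L_{E}-1/2\right)}=e^{-t/2}\text{sech}\left(t/2\right)=\frac{2}{e^{t}+1}$, whence $\sum_{n}E_{n}^{\left(p\right)}\left(x\right)\frac{t^{n}}{n!}=e^{xt}\bigl(\frac{2}{e^{t}+1}\bigr)^{p}$. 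Second, the higher-order cancellation principle: since $\imath L_{E}-\frac{1}{2}+U_{E}$ has moment generating function identically equal to $1$ when $U_{E}$ is Rademacher and independent (the content of the cancellation Lemma and the remark following it), adjoining such a term leaves all moments unchanged, so $\mathbb{E}\,E_{m}^{\left(p\right)}\left(x+U_{E}\right)=E_{m}^{\left(p-1\right)}\left(x\right)$, i.e.
\[
E_{m}^{\left(p\right)}\left(x\right)+E_{m}^{\left(p\right)}\left(x+1\right)=2\,E_{m}^{\left(p-1\right)}\left(x\right),
\]
and, replacing $x$ by $x-1$, $E_{m}^{\left(p\right)}\left(x-1\right)=2E_{m}^{\left(p-1\right)}\left(x-1\right)-E_{m}^{\left(p\right)}\left(x\right)$.

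Then I would start, as in Theorem~\ref{thm:Kim_poly}, from the purely algebraic Bernstein identity
\[
\sum_{j=0}^{n-k}\binom{n-k}{j}\left(-1\right)^{j}y^{j+k}=\sum_{j=0}^{k}\binom{k}{j}\left(-1\right)^{k-j}\left(1-y\right)^{n-j},
\]
substitute $y=x+\sum_{l=1}^{p}\bigl(\imath L_{E}^{\left(l\right)}-\frac{1}{2}\bigr)$ and take expectations. The left-hand side becomes $\sum_{j=0}^{n-k}\binom{n-k}{j}\left(-1\right)^{j}E_{j+k}^{\left(p\right)}\left(x\right)$ directly from the moment representation. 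For the right-hand side I would write $1-y=-\bigl[\left(x-1\right)+\sum_{l=1}^{p}\bigl(\imath L_{E}^{\left(l\right)}-\frac{1}{2}\bigr)\bigr]$, so that $\mathbb{E}\left(1-y\right)^{n-j}=\left(-1\right)^{n-j}E_{n-j}^{\left(p\right)}\left(x-1\right)$; collecting the sign $\left(-1\right)^{k-j}\left(-1\right)^{n-j}=\left(-1\right)^{n+k}$ yields the intermediate identity
\[
\sum_{j=0}^{n-k}\binom{n-k}{j}\left(-1\right)^{j}E_{j+k}^{\left(p\right)}\left(x\right)=\left(-1\right)^{n+k}\sum_{j=0}^{k}\binom{k}{j}E_{n-j}^{\left(p\right)}\left(x-1\right).
\]
Finally, inserting $E_{n-j}^{\left(p\right)}\left(x-1\right)=2E_{n-j}^{\left(p-1\right)}\left(x-1\right)-E_{n-j}^{\left(p\right)}\left(x\right)$ and using $-\left(-1\right)^{n+k}=\left(-1\right)^{n+k+1}$ produces exactly the asserted identity.

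The argument is essentially bookkeeping once the two preliminaries are in place; the only delicate point is keeping the signs straight --- in particular ensuring that inside $\left(1-y\right)^{n-j}$ the sum $\sum_{l}\bigl(\imath L_{E}^{\left(l\right)}-\frac{1}{2}\bigr)$ is pulled out with a single overall factor $\left(-1\right)^{n-j}$ so that $\mathbb{E}\left(1-y\right)^{n-j}$ really is $\left(-1\right)^{n-j}E_{n-j}^{\left(p\right)}\left(x-1\right)$, and then performing the reduction in the right order: first down to $E_{n-j}^{\left(p\right)}\left(x-1\right)$, and only afterwards splitting via the cancellation principle into the $E_{n-j}^{\left(p\right)}\left(x\right)$ and $E_{n-j}^{\left(p-1\right)}\left(x-1\right)$ pieces. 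As a consistency check, setting $p=1$ (so that $E_{n-j}^{\left(0\right)}\left(x-1\right)=\left(x-1\right)^{n-j}$ and $\sum_{j=0}^{k}\binom{k}{j}\left(x-1\right)^{n-j}=x^{k}\left(x-1\right)^{n-k}$) should recover Theorem~\ref{thm:Kim_poly}, and then $x=0$ should recover Kim's original Euler-number identity.
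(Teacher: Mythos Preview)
Your proposal is correct and follows essentially the same route as the paper: start from the Bernstein-polynomial identity, substitute $y=x+\sum_{l=1}^{p}\bigl(\imath L_{E}^{(l)}-\tfrac12\bigr)$, use the moment representation to turn the left side into $\sum_j\binom{n-k}{j}(-1)^jE_{j+k}^{(p)}(x)$ and the right side into $(-1)^{n+k}\sum_j\binom{k}{j}E_{n-j}^{(p)}(x-1)$, and then split $E_{n-j}^{(p)}(x-1)$ via the higher-order cancellation rule $E_m^{(p)}(x-1)+E_m^{(p)}(x)=2E_m^{(p-1)}(x-1)$. Your write-up is in fact more explicit than the paper's (you spell out the moment representation for $E_n^{(p)}$ and the cancellation rule, and you track the sign $(-1)^{k-j}(-1)^{n-j}=(-1)^{n+k}$ carefully), and your $p=1$ consistency check against Theorem~\ref{thm:Kim_poly} is a nice addition.
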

\begin{proof}
Starting again from identity \eqref{eq:identity}, we take $x=X+\sum_{l=1}^{p}\left(\imath L_{E}^{\left(l\right)}-\frac{1}{2}\right)$and
copute
\[
\left(1-x\right)^{n-j}=\left(1-X-\sum_{l=1}^{p}\left(\imath L_{E}^{\left(l\right)}-\frac{1}{2}\right)\right)^{n-j}=\left(-1\right)^{n-j}E_{n-j}^{\left(p\right)}\left(X-1\right).
\]
However, by the cancellation rule
\[
E_{n-j}^{\left(p\right)}\left(X-1\right)+E_{n-j}^{\left(p\right)}\left(X\right)=2E_{n-j}^{\left(p-1\right)}\left(X-1\right)
\]
and the result follows.
\end{proof}

\subsection{Raabe's and Nielsen's multiplication theorem for Nörlund polynomials}

Raabe's usual multiplication theorem 
\[
m^{1-n}B_{n}\left(mx\right)=\sum_{l=0}^{m-1}B_{n}\left(x+\frac{l}{m}\right)
\]
and 
\[
m^{-n}E_{n}\left(mx\right)=\sum_{l=0}^{m-1}\left(-1\right)^{l}E_{n}\left(x+\frac{l}{m}\right),\,\, m\,\,\text{odd}
\]
and Nielsen's multiplication theorem
\[
m^{-n}E_{n}\left(mx\right)=-\frac{2}{n+1}\sum_{l=0}^{m-1}\left(-1\right)^{l}B_{n+1}\left(x+\frac{l}{m}\right),\,\, m\,\,\text{even}
\]
are an interesting feature of the Bernoulli polynomials since, as
noted by Nielsen, \cite[p. 54]{Nielsen}
\begin{quotation}
It is very curious, it seems to me, that there exist polynomials,
with arbitrary degree, that satisfy equations of the above form. However,
it is easy to prove that, up to an arbitrary constant factor, the
$B_{n}\left(x\right)$ and $E_{n}\left(x\right)$ are the only polynomials
that satisfy the mentioned property.
\end{quotation}
Using the moment representation and basic results from probability
theory, we propose the following extension of Raabe's celebrated multiplication
theorem to the multivariate case.
\begin{thm}
If $m\in\mathbb{N},$
\begin{equation}
m^{k-n}B_{n}^{\left(k\right)}\left(mx\vert\mathbf{a}\right)=\sum_{l_{1},\dots,l_{k}=0}^{m-1}B_{n}^{\left(k\right)}\left(x+\frac{1}{m}\sum_{i=1}^{k}a_{i}l_{i}\vert\mathbf{a}\right)\label{eq:Raabe}
\end{equation}
and if moreover $m$ is odd, 
\begin{equation}
m^{-n}E_{n}^{\left(k\right)}\left(mx\vert\mathbf{a}\right)=\sum_{l_{1},\dots,l_{k}=0}^{m-1}\left(-1\right)^{l_{1}+\dots+l_{k}}E_{n}^{\left(k\right)}\left(x+\frac{1}{m}\sum_{i=1}^{k}a_{i}l_{i}\vert\mathbf{a}\right).\label{eq:Raabe2}
\end{equation}
\end{thm}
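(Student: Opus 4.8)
The plan is to mimic the classical probabilistic proof of Raabe's formula, deducing both identities from the moment representation \eqref{eq:Bnk(x)expectation} and from its hyperbolic-secant analogue $E_{n}^{\left(k\right)}\left(x\vert\mathbf{a}\right)=\mathbb{E}\left(x+\sum_{j=1}^{k}a_{j}\left(\imath L_{E}^{\left(j\right)}-\tfrac{1}{2}\right)\right)^{n}$ (with the $L_{E}^{\left(j\right)}$ independent hyperbolic secant variables, obtained exactly as in the proof of \eqref{eq:Bnk(x)expectation}). For \eqref{eq:Raabe} I would introduce random variables $D_{1},\dots,D_{k}$, independent of one another and of the $L_{B}^{\left(j\right)}$, each uniformly distributed on $\left\{ 0,\tfrac{1}{m},\dots,\tfrac{m-1}{m}\right\}$. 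Scaling by $m$ inside \eqref{eq:Bnk(x)expectation} gives $m^{k-n}B_{n}^{\left(k\right)}\left(mx\vert\mathbf{a}\right)=m^{k}\,\mathbb{E}\left(x+\tfrac{1}{m}\sum_{j}a_{j}\left(\imath L_{B}^{\left(j\right)}-\tfrac{1}{2}\right)\right)^{n}$, while summing \eqref{eq:Bnk(x)expectation} over $l_{1},\dots,l_{k}$ expresses the right-hand side of \eqref{eq:Raabe} as $m^{k}\,\mathbb{E}\left(x+\sum_{j}a_{j}\left(D_{j}+\imath L_{B}^{\left(j\right)}-\tfrac{1}{2}\right)\right)^{n}$. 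Hence it suffices to show that $\tfrac{1}{m}\sum_{j}a_{j}\left(\imath L_{B}^{\left(j\right)}-\tfrac{1}{2}\right)$ and $\sum_{j}a_{j}\left(D_{j}+\imath L_{B}^{\left(j\right)}-\tfrac{1}{2}\right)$ have the same moments.

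Since both expressions are sums of independent contributions indexed by $j$, this reduces to a single-factor identity, which I would verify at the level of moment generating functions (legitimate because all the series involved are analytic near $0$, or equivalently by replacing $s$ with $\imath t$ and working with characteristic functions as elsewhere in the paper). Using $\mathbb{E}e^{s\left(\imath L_{B}-\frac{1}{2}\right)}=s/\left(e^{s}-1\right)$ together with the geometric sum $\mathbb{E}e^{sD}=\tfrac{1}{m}\sum_{l=0}^{m-1}e^{sl/m}=\tfrac{1}{m}\,\tfrac{e^{s}-1}{e^{s/m}-1}$ gives $\mathbb{E}e^{s\left(D+\imath L_{B}-\frac{1}{2}\right)}=\tfrac{s/m}{e^{s/m}-1}=\mathbb{E}e^{\left(s/m\right)\left(\imath L_{B}-\frac{1}{2}\right)}$. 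Taking the product over $j$ with $s=a_{j}t$, multiplying by $e^{tx}$ and reading off the coefficient of $t^{n}/n!$ yields \eqref{eq:Raabe}.

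For \eqref{eq:Raabe2} I would run the same argument but replace the uniform law of $D_{j}$ by the signed weighting $\nu\left(\left\{ l/m\right\} \right)=\left(-1\right)^{l}$, $l=0,\dots,m-1$; since $m$ is odd this has total mass $\sum_{l=0}^{m-1}\left(-1\right)^{l}=1$, so no factor $m^{k}$ appears and the right-hand side of \eqref{eq:Raabe2} equals $\mathbb{E}\left(x+\sum_{j}a_{j}\left(D_{j}+\imath L_{E}^{\left(j\right)}-\tfrac{1}{2}\right)\right)^{n}$ for this signed set-up. The corresponding one-factor computation uses $\mathbb{E}e^{s\left(\imath L_{E}-\frac{1}{2}\right)}=2/\left(e^{s}+1\right)$ and $\sum_{l=0}^{m-1}\left(-e^{s/m}\right)^{l}=\tfrac{1-\left(-e^{s/m}\right)^{m}}{1+e^{s/m}}=\tfrac{1+e^{s}}{1+e^{s/m}}$ (here $\left(-1\right)^{m}=-1$), whence $\tfrac{1+e^{s}}{1+e^{s/m}}\cdot\tfrac{2}{e^{s}+1}=\tfrac{2}{e^{s/m}+1}=\mathbb{E}e^{\left(s/m\right)\left(\imath L_{E}-\frac{1}{2}\right)}$; taking products over $j$ and comparing coefficients of $t^{n}/n!$ in $e^{tx}\prod_{j}\tfrac{2}{e^{a_{j}t/m}+1}$ gives \eqref{eq:Raabe2}.

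The argument is essentially routine once the set-up is in place; the only point requiring care is the Euler case, where one must work with a signed measure rather than a genuine probability law and observe that the oddness of $m$ is exactly what is needed, both to make the total signed mass equal to $1$ and to make the telescoping $\sum_{l=0}^{m-1}\left(-e^{s/m}\right)^{l}$ produce the numerator $1+e^{s}$. For even $m$ this bookkeeping collapses, which is precisely why Nielsen's rather than Raabe's multiplication formula is the correct statement there; one could presumably obtain a multivariate Nielsen identity along the same lines, but this is not pursued here.
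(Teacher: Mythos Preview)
Your argument is correct and follows the same overall strategy as the paper: start from the moment representation \eqref{eq:Bnk(x)expectation}, introduce independent discrete variables taking values in $\{0,\dots,m-1\}$ (a uniform law in the Bernoulli case, the signed weights $(-1)^{l}$ in the Euler case), and reduce the identity to a single-factor statement by independence.

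The implementations diverge at that last step. You verify the one-factor identity by a direct computation of moment generating functions, using the geometric sum $\sum_{l=0}^{m-1}e^{sl/m}$ (resp.\ $\sum_{l=0}^{m-1}(-e^{s/m})^{l}$) to collapse $\frac{s}{e^{s}-1}$ to $\frac{s/m}{e^{s/m}-1}$ (resp.\ $\frac{2}{e^{s}+1}$ to $\frac{2}{e^{s/m}+1}$). The paper instead stays inside the probabilistic framework: it invokes the cancellation principle (Lemma~1) to insert an auxiliary pair $\bigl(\imath\tilde{L}_{B}^{(i)}-\tfrac12\bigr)+U_{B}^{(i)}$, then uses the distributional identity $\tilde{U}^{(i)}+U_{B}^{(i)}\overset{d}{=}mU_{B}^{(i)}$ (and its Euler analogue $\hat{U}^{(i)}+U_{E}^{(i)}\overset{d}{=}mU_{E}^{(i)}$ for $m$ odd) before cancelling again. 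Your route is shorter and entirely self-contained; the paper's route has the virtue of showcasing the cancellation lemma and the underlying distributional identities as the ``reason'' for Raabe's formula, which ties the result more tightly to the probabilistic machinery developed earlier. Either way the Euler case requires exactly the signed-measure caveat you flag, and your remark about why even $m$ fails matches the paper's treatment in the subsequent theorem.
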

\begin{proof}
Let us denote $\left\{ \tilde{U}^{\left(i\right)}\right\} _{1\le i\le k}$
a set of $k$ discrete random variables independent and uniformly
distributed in the set $\left\{ 0,\dots,m-1\right\} $ and $\left\{ U_{B}^{\left(i\right)}\right\} _{1\le i\le k}$
a set of $k$ continuous random variables independent and uniformly
distributed on the interval $\left[0,1\right].$ For the Bernoulli
case, we have 
\begin{gather*}
\frac{1}{m^{k}}\sum_{l_{1},\dots,l_{k}=0}^{m-1}B_{n}^{\left(k\right)}\left(x+\frac{1}{m}\sum_{i=1}^{k}a_{i}l_{i}\vert\mathbf{a}\right)=\mathbb{E}\left(x+\sum_{i=1}^{k}a_{i}\left(\imath L_{B}^{\left(i\right)}-\frac{1}{2}\right)+\frac{1}{m}\sum_{i=1}^{k}a_{i}\tilde{U}^{\left(i\right)}\right)^{n}\\
=\frac{1}{m^{n}}\mathbb{E}\left(mx+m\sum_{i=1}^{k}a_{i}\left(\imath L_{B}^{\left(i\right)}-\frac{1}{2}\right)+\sum_{i=1}^{k}a_{i}\tilde{U}^{\left(i\right)}\right)^{n}\\
=\frac{1}{m^{n}}\mathbb{E}\left(mx+m\sum_{i=1}^{k}a_{i}\left(\imath L_{B}^{\left(i\right)}-\frac{1}{2}\right)+\sum_{i=1}^{k}a_{i}\tilde{U}^{\left(i\right)}+\sum_{i=1}^{k}a_{i}\left(\imath\tilde{L}_{B}^{\left(i\right)}-\frac{1}{2}\right)+\sum_{i=1}^{k}a_{i}U_{B}^{\left(i\right)}\right)^{n}
\end{gather*}
Now we use the fact that $\tilde{U}^{\left(i\right)}+U_{B}^{\left(i\right)}$
has the same distribution as $mU_{B}^{\left(i\right)}$ so that we
obtain 
\[
\frac{1}{m^{n}}\mathbb{E}\left(mx+m\sum_{i=1}^{k}a_{i}\left(\imath L_{B}^{\left(i\right)}-\frac{1}{2}\right)+\sum_{i=1}^{k}a_{i}\left(\imath\tilde{L}_{B}^{\left(i\right)}-\frac{1}{2}\right)+m\sum_{i=1}^{k}a_{i}U_{B}^{\left(i\right)}\right)^{n}
\]
and applying the cancellation principle, we deduce
\[
\frac{1}{m^{n}}\mathbb{E}\left(mx+\sum_{i=1}^{k}a_{i}\left(\imath\tilde{L}_{B}^{\left(i\right)}-\frac{1}{2}\right)\right)^{n}=\frac{1}{m^{n}}B_{n}^{\left(k\right)}\left(mx\vert\mathbf{a}\right).
\]

For the Euler case, we need to use a signed measure (and then depart
temporarily from the probabilistic context) defining the set  $\left\{ \hat{U}^{\left(i\right)}\right\} _{1\le i\le k}$
of $k$ discrete variables independent such as each $\hat{U}^{\left(i\right)}$
takes values in $\left\{ 0,\dots,k,\dots,m-1\right\} $ with a weight
$\left(-1\right)^{k}.$ Then
\begin{gather*}
\sum_{l_{1},\dots,l_{k}=0}^{m-1}\left(-1\right)^{l_{1}+\dots+l_{k}}E_{n}\left(x+\frac{1}{m}\sum_{i=1}^{k}a_{i}l_{i}\vert\mathbf{a}\right)=\mathbb{E}\left(x+\sum_{i=1}^{k}a_{i}\left(\imath L_{E}^{\left(i\right)}-\frac{1}{2}\right)+\frac{1}{m}\sum_{i=1}^{k}a_{i}\hat{U}^{\left(i\right)}\right)^{n}\\
=\frac{1}{m^{n}}\mathbb{E}\left(mx+m\sum_{i=1}^{k}a_{i}\left(\imath L_{E}^{\left(i\right)}-\frac{1}{2}\right)+\sum_{i=1}^{k}a_{i}\hat{U}^{\left(i\right)}\right)^{n}\\
=\frac{1}{m^{n}}\mathbb{E}\left(mx+m\sum_{i=1}^{k}a_{i}\left(\imath L_{E}^{\left(i\right)}-\frac{1}{2}\right)+\sum_{i=1}^{k}a_{i}\hat{U}^{\left(i\right)}+\sum_{i=1}^{k}a_{i}\left(\imath\tilde{L}_{E}^{\left(i\right)}-\frac{1}{2}\right)+\sum_{i=1}^{k}a_{i}U_{E}^{\left(i\right)}\right)^{n}
\end{gather*}
where now $\left\{ U_{E}^{\left(i\right)}\right\} $ are independent
Rademacher random variables and, since $m$ is odd, each $U_{E}^{\left(i\right)}+\hat{U}^{\left(i\right)}$
has the same distribution as $mU_{E}^{\left(i\right)}$ so that we
obtain 
\[
\frac{1}{m^{n}}\mathbb{E}\left(mx+m\sum_{i=1}^{k}a_{i}\left(\imath L_{E}^{\left(i\right)}-\frac{1}{2}\right)+\sum_{i=1}^{k}a_{i}\left(\imath\tilde{L}_{E}^{\left(i\right)}-\frac{1}{2}\right)+m\sum_{i=1}^{k}a_{i}U{}_{E}^{\left(i\right)}\right)^{n}
\]
and from the cancellation principle, we deduce the result.
\end{proof}
Raabe's identity \eqref{eq:Raabe} and \eqref{eq:Raabe2} are in fact
given without proof in \cite[eq. (1.6) and (1.7)]{Carlitz}. The case
for $m$ even is not provided, so we prove now
\begin{thm}
With $n\in\mathbb{N}$ and $m$ even,
\[
m^{k-n}\left(-\frac{1}{2}\right)^{k}\frac{n!}{\left(n-k\right)!}\left(\prod_{i=1}^{k}a_{i}\right)E_{n-k}^{\left(k\right)}\left(mx\right)=\sum_{l_{1},\dots,l_{k}=0}^{m-1}\left(-1\right)^{l_{1}+\dots+l_{k}}B_{n}^{\left(k\right)}\left(x+\frac{1}{m}\sum_{i=1}^{k}a_{i}l_{i}\vert\mathbf{a}\right).
\]
\end{thm}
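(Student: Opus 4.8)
The plan is to feed the moment representation \eqref{eq:Bnk(x)expectation} --- together with its Euler counterpart --- into the alternating sum, pass to the exponential generating function in the variable $t$, collapse it by a single elementary summation in which the evenness of $m$ is decisive, and then read off the coefficient of $t^{n}/n!$. Concretely, introduce for $1\le i\le k$ independent signed variables $\hat U^{(i)}$, each supported on $\{0,\dots,m-1\}$ with the weight $(-1)^{j}$ attached to the point $j$ (so that ``$\mathbb E$'' over $\hat U^{(i)}$ means $\sum_{j=0}^{m-1}(-1)^{j}$), all independent of the logistic variables $L_{B}^{(i)}$. Then \eqref{eq:Bnk(x)expectation} and linearity give
\[
\sum_{l_{1},\dots,l_{k}=0}^{m-1}(-1)^{l_{1}+\dots+l_{k}}B_{n}^{(k)}\left(x+\frac{1}{m}\sum_{i=1}^{k}a_{i}l_{i}\,\vert\,\mathbf a\right)=\mathbb E\left(x+\sum_{i=1}^{k}a_{i}\left(\imath L_{B}^{(i)}-\frac12+\frac{1}{m}\hat U^{(i)}\right)\right)^{n},
\]
and, the argument being a sum of independent contributions, the exponential generating function in $t$ of this quantity factors over $i$: the $i$-th factor is the product of the logistic moment generating function $\mathbb E e^{ta_{i}(\imath L_{B}^{(i)}-1/2)}=a_{i}t/(e^{a_{i}t}-1)$ with the finite sum $\sum_{j=0}^{m-1}(-1)^{j}e^{a_{i}jt/m}$. (Equivalently one may just invoke the N\"orlund generating function $\sum_{n}B_{n}^{(k)}(y\vert\mathbf a)\,t^{n}/n!=e^{yt}\prod_{j}a_{j}t/(e^{a_{j}t}-1)$.)

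The crux is the identity $\sum_{j=0}^{m-1}(-1)^{j}z^{j}=(1-z^{m})/(1+z)$, valid precisely because $m$ is \emph{even}; with $z=e^{a_{i}t/m}$ it gives $\sum_{j=0}^{m-1}(-1)^{j}e^{a_{i}jt/m}=(1-e^{a_{i}t})/(1+e^{a_{i}t/m})$, so that the $i$-th factor collapses:
\[
\frac{a_{i}t}{e^{a_{i}t}-1}\cdot\frac{1-e^{a_{i}t}}{1+e^{a_{i}t/m}}=-\frac{a_{i}t}{1+e^{a_{i}t/m}}=-\frac{a_{i}t}{2}\cdot\frac{2}{e^{a_{i}t/m}+1}.
\]
Multiplying over $i$ and by the factor $e^{xt}$ coming from $x$, the generating function of the left-hand side becomes
\[
\left(-\frac12\right)^{k}\left(\prod_{i=1}^{k}a_{i}\right)t^{k}\,e^{xt}\prod_{i=1}^{k}\frac{2}{e^{a_{i}t/m}+1}=\left(-\frac12\right)^{k}\left(\prod_{i=1}^{k}a_{i}\right)t^{k}\sum_{p\ge0}E_{p}^{(k)}\left(x\,\vert\,\frac{\mathbf a}{m}\right)\frac{t^{p}}{p!},
\]
in which I recognize the generating function $e^{xt}\prod_{j}2/(e^{b_{j}t}+1)$ of the higher-order Euler polynomials with parameter $\mathbf b=\mathbf a/m$.

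It then remains to extract the coefficient of $t^{n}/n!$: the prefactor $t^{k}$ forces $p=n-k$ and brings in the ratio $n!/(n-k)!$, while the homogeneity relation $E_{n-k}^{(k)}(x\vert\mathbf a/m)=m^{k-n}E_{n-k}^{(k)}(mx\vert\mathbf a)$ --- immediate from the Euler analogue $E_{n}^{(k)}(x\vert\mathbf a)=\mathbb E(x+\sum_{i}a_{i}(\imath L_{E}^{(i)}-\frac12))^{n}$ of \eqref{eq:Bnk(x)expectation} by replacing each $a_{i}$ with $a_{i}/m$ --- converts $x$ into $mx$ and produces the power $m^{k-n}$. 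Assembling these pieces gives exactly the asserted identity (so that $E_{n-k}^{(k)}(mx)$ in the statement is to be read as $E_{n-k}^{(k)}(mx\vert\mathbf a)$). I expect the only genuine difficulty to be the bookkeeping: keeping the parameter rescaling $\mathbf a\mapsto\mathbf a/m$ in step with the argument rescaling $x\mapsto mx$, merging the sign $(-1)^{k}$ with the factor $2^{-k}$ into $(-1/2)^{k}$, and invoking the evenness of $m$ at the single place where it is used --- the alternating geometric sum --- since it is exactly there that the passage from the Bernoulli to the Euler generating function takes place.
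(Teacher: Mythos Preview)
Your proof is correct, and it takes a genuinely different route from the paper's.

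The paper stays entirely inside the probabilistic framework: after writing the right-hand side as $\mathbb E\bigl(x+\sum_i a_i(W_i/m+B_i)\bigr)^n$ with the same signed variables $W_i$ that you call $\hat U^{(i)}$, it adjoins, via the Euler cancellation principle, auxiliary hyperbolic-secant variables $E_i$ and Rademacher variables $U_E^{(i)}$. The key probabilistic observation is that for $m$ even the combination $W_i+U_E^{(i)}$ takes only the values $0$ and $m$, with weights $\tfrac12$ and $-\tfrac12$; this converts one of the $k$ factors into a first difference $f(\cdot+ma_k)-f(\cdot)$, which in turn is $ma_k$ times an expectation over a fresh uniform $U_B^{(k)}$ that cancels against $mB_k$. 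One is left with the same expression with $(n,k)$ replaced by $(n-1,k-1)$, and the result follows by iteration.

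You instead pass immediately to the exponential generating function and collapse each factor via the alternating geometric sum $\sum_{j=0}^{m-1}(-1)^j z^j=(1-z^m)/(1+z)$ for $m$ even, which turns the Bernoulli kernel $a_it/(e^{a_it}-1)$ directly into the Euler kernel $-\tfrac{a_it}{2}\cdot 2/(e^{a_it/m}+1)$; reading off the coefficient of $t^n/n!$ then yields the identity in one stroke. Your approach is shorter and more elementary, and it makes the exact role of the parity of $m$ completely transparent; the paper's approach, on the other hand, keeps to the probabilistic theme of the article and shows how the factor $\frac{n!}{(n-k)!}\prod_i a_i$ arises one step at a time through successive cancellations. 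Your remark that $E_{n-k}^{(k)}(mx)$ in the statement should be read as $E_{n-k}^{(k)}(mx\vert\mathbf a)$ is also well taken.
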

\begin{proof}
Let us define a variable $W=\left\{ 0,\dots,m-1\right\} $ with weights
$\left(-1\right)^{l}.$ Then the right-hand side reads, with $B_{i}=\imath L_{B}^{\left(i\right)}-\frac{1}{2}$
and $E_{i}=\imath L_{E}^{\left(i\right)}-\frac{1}{2}$, 
\begin{eqnarray*}
\mathbb{E}\left(x+\sum_{i=1}^{k}a_{i}\left(\frac{W_{i}}{m}+B_{i}\right)\right)^{n} & = & m^{-n}\mathbb{E}\left(x+\sum_{i=1}^{k}a_{i}\left(W_{i}+mB_{i}+E_{i}+U_{E}^{\left(i\right)}\right)\right)^{n}\\
 & = & m^{-n}E_{n}\left(x+\sum_{i=1}^{k}a_{i}\left(W_{i}+mB_{i}+U_{E}^{\left(i\right)}\right)\right)
\end{eqnarray*}
but it can be checked that each $W_{i}+U_{E}^{\left(i\right)}$ takes
values $0$ and $m$ with respective weights $\frac{1}{2}$ and $-\frac{1}{2}$
so that we obtain
\[
-\frac{1}{2}m^{-n}\left(E_{n}\left(x+m\sum_{i=1}^{k}a_{i}B_{i}+\sum_{i=1}^{k-1}\left(W_{i}+U_{E}^{\left(i\right)}\right)+ma_{k}\right)-E_{n}\left(x+m\sum_{i=1}^{k}a_{i}B_{i}+\sum_{i=1}^{k-1}\left(W_{i}+U_{E}^{\left(i\right)}\right)\right)\right)
\]
which coincides with
\[
-\frac{1}{2}m^{-n}E_{n}\left(x+m\sum_{i=1}^{k}a_{i}B_{i}+\sum_{i=1}^{k-1}\left(W_{i}+U_{E}^{\left(i\right)}\right)+ma_{k}U_{B}^{\left(k\right)}\right)=-\frac{1}{2}m^{-n}\left(ma_{k}\right)nE_{n-1}\left(x+\sum_{i=1}^{k-1}a_{i}\left(W_{i}+mB_{i}+U_{E}^{\left(i\right)}\right)\right)
\]
since $ma_{k}B_{k}$ and $ma_{k}U_{B}^{\left(k\right)}$ cancel out.
We are now back, up to a factor, to the same quantity as before except
that $n$ is replaced by $n-1$ and $k$ by $k-1$, hence the result.\end{proof}

\end{document}